\documentclass[12pt]{article}
\usepackage{amsmath}
\usepackage{amssymb}
\usepackage{amsthm}
\usepackage[mathscr]{eucal}
\usepackage{latexsym}
\usepackage{amscd}
\usepackage{epic}
\usepackage{epsfig}

\setlength{\textwidth}{15.5truecm}
\setlength{\textheight}{21truecm}

\begin{document}
\baselineskip=15pt

\voffset -1truecm
\oddsidemargin0.3truecm

\theoremstyle{plain}
\swapnumbers\newtheorem{lema}{Lemma}[section]
\newtheorem{prop}[lema]{Proposition}
\newtheorem{coro}[lema]{Corollary}
\newtheorem{teor}[lema]{Theorem}
\newtheorem{demo}[lema]{Proof of Theorem 1}
\newtheorem{ejem}[lema]{Example}
\newtheorem{obs}[lema]{Remark}

\renewcommand{\refname}{\large\bf References}
\renewcommand{\thefootnote}{\fnsymbol{footnote}}

\def\natural{{\mathbb N}}
\def\entero{{\mathbb Z}}
\def\racional{{\mathbb Q}}
\def\real{{\mathbb R}}
\def\complejo{{\mathbb C}}

\def\vector#1#2{({#1}_1,\dots,{#1}_{#2})}
\def\flecha{\longrightarrow}
\def\asocia{\longmapsto}
\def\sii{\Longleftrightarrow}
\def\implica{\Longrightarrow}
\def\conjunto#1{\boldsymbol{[}\,{#1}\,\boldsymbol{]}}
\def\wt#1{\widetilde{#1}}
\def\ol#1{\overline{#1}}

\def\la{\lambda}
\def\longi#1{\ell({#1})}
\def\lintm{\lambda\cap\mu}
\def\domina{\succeq}
\def\rectan#1#2{({#1})^{#2}}
\def\liri{Littlewood-Richardson\ }
\def\lirim{Littlewood-Richardson multitableau\ }
\def\lirimx{Littlewood-Richardson multitableaux\ }
\def\sbst#1#2{{\sf SBST}({#1},{#2})}
\def\coefliri#1#2#3{c^{\,#1}_{{#2}\,{#3}}}
\def\coefliril#1#2{c^{\,\lambda}_{{#1}\,{#2}}}
\def\ro1r{(\rho(1), \dots ,\rho(r))}
\def\rosec{\rho(1)\vdash \pi_1, \dots , \rho(r)\vdash \pi_r}
\def\clirir#1{c^{#1}_{\ro1r}}
\def\kos#1#2{K_{{#1}\,{#2}}}
\def\multiliri#1{{\sf lr}(\la, \mu; {#1})}
\def\mmultiliri#1{{\sf LR}(\la, \mu; {#1})}
\def\alt#1{{\sf ht}({#1})}
\def\signo#1{{\sf sign}({#1})}
\def\sec#1#2{\varnothing = {#1}(0) \subset {#1}(1) \subset \cdots
\subset {#1}({#2}) }

\def\cara#1{\chi^{#1}}
\def\permu#1{\phi^{#1}}
\def\coefi{{\sf k}(\la,\mu,\nu)}
\def\kron{\chi^\lambda\otimes\chi^\mu}

\def\gruposim#1{{\sf S}_{#1}}

\def\coeflirim#1#2{c^{\,\mu}_{{#1}\,{#2}}}
\def\partil#1{\lambda({#1})}
\def\partim#1{\mu({#1})}
\def\partilintm#1{\lambda({#1})\cap\mu({#1})}
\def\pareja{(\lambda,\,\mu)}
\def\diferencial{\lambda-\lambda\cap\mu}
\def\diferenciam{\mu-\lambda\cap\mu}

\def\tablau{\tabla{ \ \\}}
\def\tabladh{\tabla{ \ & \ \\}}
\def\tabladv{\tabla{ \ \\ \ \\}}
\def\tablath{\tabla{ \ & \ & \ \\ }}
\def\tablatv{\tabla{ \ \\ \ \\ \ \\}}
\def\tablatdu{\tabla{ \ & \ \\ \ \\}}
\def\tablatud{\tabla{ & \ \\ \ & \ \\ }}
\def\tablauuu{\tablau\sqcup\tablau\sqcup\tablau}


\setlength\unitlength{0.08em}
\savebox0{\rule[-2\unitlength]{0pt}{10\unitlength}%
\begin{picture}(10,10)(0,2)
\put(0,0){\line(0,1){10}}
\put(0,10){\line(1,0){10}}
\put(10,0){\line(0,1){10}}
\put(0,0){\line(1,0){10}}
\end{picture}}

\newlength\cellsize \setlength\cellsize{18\unitlength}
\savebox2{%
\begin{picture}(18,18)
\put(0,0){\line(1,0){18}}
\put(0,0){\line(0,1){18}}
\put(18,0){\line(0,1){18}}
\put(0,18){\line(1,0){18}}
\end{picture}}
\newcommand\cellify[1]{\def\thearg{#1}\def\nothing{}%
\ifx\thearg\nothing
\vrule width0pt height\cellsize depth0pt\else
\hbox to 0pt{\usebox2\hss}\fi%
\vbox to 18\unitlength{
\vss
\hbox to 18\unitlength{\hss$#1$\hss}
\vss}}
\newcommand\tableau[1]{\vtop{\let\\=\cr
\setlength\baselineskip{-16000pt}
\setlength\lineskiplimit{16000pt}
\setlength\lineskip{0pt}
\halign{&\cellify{##}\cr#1\crcr}}}
\savebox3{%
\begin{picture}(15,15)
\put(0,0){\line(1,0){15}}
\put(0,0){\line(0,1){15}}
\put(15,0){\line(0,1){15}}
\put(0,15){\line(1,0){15}}
\end{picture}}
\newcommand\expath[1]{%
\hbox to 0pt{\usebox3\hss}%
\vbox to 15\unitlength{
\vss
\hbox to 15\unitlength{\hss$#1$\hss}
\vss}}
\newlength\celulita \setlength\celulita{5\unitlength}
\savebox3{%
\begin{picture}(5,5)
\put(0,0){\line(1,0){5}}
\put(0,0){\line(0,1){5}}
\put(5,0){\line(0,1){5}}
\put(0,5){\line(1,0){5}}
\end{picture}}
\newcommand\celificar[1]{\def\thearg{#1}\def\nothing{}%
\ifx\thearg\nothing
\vrule width0pt height\celulita depth0pt\else
\hbox to 0pt{\usebox3\hss}\fi%
\vbox to 5\unitlength{
\vss
\hbox to 5\unitlength{\hss$#1$\hss}
\vss}}
\newcommand\tablita[1]{\vtop{\let\\=\cr
\setlength\baselineskip{-16000pt}
\setlength\lineskiplimit{16000pt}
\setlength\lineskip{0pt}
\halign{&\celificar{##}\cr#1\crcr}}}

\newlength\cuadro \setlength\cuadro{7\unitlength}
\savebox4{%
\begin{picture}(7,7)
\put(0,0){\line(1,0){7}}
\put(0,0){\line(0,1){7}}
\put(7,0){\line(0,1){7}}
\put(0,7){\line(1,0){7}}
\end{picture}}
\newcommand\cuadrificar[1]{\def\thearg{#1}\def\nothing{}%
\ifx\thearg\nothing
\vrule width0pt height\cuadro depth0pt\else
\hbox to 0pt{\usebox4\hss}\fi%
\vbox to 7\unitlength{
\vss
\hbox to 7\unitlength{\hss$#1$\hss}
\vss}}
\newcommand\tabla[1]{\vtop{\let\\=\cr
\setlength\baselineskip{-16000pt}
\setlength\lineskiplimit{16000pt}
\setlength\lineskip{0pt}
\halign{&\cuadrificar{##}\cr#1\crcr}}}


\begin{centering}
{\Large\bf A stability property for coefficients in}\\[.2cm]
{\Large\bf Kronecker products of complex {\mathversion{bold}$S_n$} characters}\\[1cm]
{\Large\sf Ernesto Vallejo\footnotemark[1]}\\[.1cm]
Universidad Nacional Aut\'onoma de M\'exico\\
Instituto de Matem\'aticas, Unidad Morelia\\
Apartado Postal 61-3, Xangari\\
58089 Morelia, Mich., MEXICO\\
e-mail: {\tt vallejo@matmor.unam.mx}\\[.4cm]
\end{centering}
\footnotetext[1]{Supported by CONACYT-Mexico, 47086-F and UNAM-DGAPA IN103508}

\vskip 2pc
\begin{abstract}
In this note we make explicit a stability property for
Kronecker coefficients that is implicit in a theorem of
Y.~Dvir.
Even in the simplest nontrivial case this property was overlooked
despite of the work of several authors.
As an application we give a new formula for some Kronecker
coefficients.
\smallskip

{\em Key Words}: Kronecker product, Characters, Symmetric group,
Schur functions, Internal product.
\end{abstract}

\section{Introduction}

Let $\la$, $\mu$, $\nu$ be partitions of a positive integer $m$
and let $\cara\la$, $\cara\mu$, $\cara\nu$ be their corresponding
complex irreducible characters of the symmetric group $\gruposim m$.
It is a long standing problem to give a satisfactory method for
computing the multiplicity
\begin{equation} \label{ecua:coefi}
\coefi := \langle \kron, \cara\nu \rangle
\end{equation}
of $\cara\nu$ in the Kronecker product $\kron$ of $\cara\la$ and
$\cara\mu$ (here $\langle \cdot, \cdot \rangle$ denotes the inner
product of complex characters).
Via the Frobenius map, $\coefi$ is equal to the
multiplicity of the Schur function
$s_\nu$ in the internal product of Schur functions
$s_\la \ast s_\mu$, namely
\begin{equation*}
\coefi = \langle s_\la \ast s_\mu, s_\nu \rangle\, ,
\end{equation*}
where $\langle \cdot, \cdot \rangle$ denotes the scalar product of
symmetric functions.

The first stability property for Kronecker coefficients was
observed by F.~Murnaghan without proof in~\cite{mur}.
This property can be stated in the following way:
Let $\ol\la$, $\ol\mu$, $\ol\nu$ be partitions of $a$, $b$, $c$,
respectively.
Define $\la(n):=(n-a,\ol\la)$, $\mu(n):=(n-b,\ol\mu)$,
$\nu(n):=(n-c,\ol\nu)$.
Then the coefficient ${\sf k}(\la(n), \mu(n), \nu(n))$ is constant
for all $n$ bigger than some integer $N(\ol\la, \ol\mu, \ol\nu)$.
Complete proofs of this property were given by
M.~Brion~\cite{bri} using algebraic geometry and
E.~Vallejo~\cite{vejc} using combinatorics of Young tableaux.
Both proofs give different lower bounds
$N(\ol\la, \ol\mu, \ol\nu)$ for the stability of
${\sf k}(\la(n), \mu(n), \nu(n))$, for all partitions
$\ol\la$, $\ol\mu$, $\ol\nu$.
C. Ballantine and R. Orellana~\cite{baor1} gave an improvement of one of
these lower bounds for a particular case.

Here we make explicit another stability property for Kronecker
coefficients that is implicit in the work of Y.~Dvir
(Theorem~$2.4^\prime$ in~\cite{dvir}).
This property can be stated as follows:
Let $p$, $q$, $r$ be positive integers such that $p=qr$.
Let $\la = \vector{\la}p$, $\mu = \vector{\mu}q$, $\nu =\vector{\nu}r$
be partitions of  some nonnegative integer $m$
satisfying $\longi{\la} \le p$, $\longi{\mu} \le q$,
$\longi\nu \le r$, that is, some parts of $\la$, $\mu$ and $\nu$ could be zero.
For any positive integers $t$ and $n$ let $\rectan tn$ denote the vector
$(t, \dots, t)\in\natural^n$.
Then we have

\bigskip
{\bf Theorem~\ref{teor:principal}.}
With the above notation
\[
\coefi = {\sf k}(\la + \rectan tp, \mu + \rectan{rt}q, \nu + \rectan{qt}r)\, .
\]

It should be noted that even in the simplest nontrivial case, when
$q=2=r$ and $p=4$, this property was overlooked despite of the work
of several authors~\cite{baor1,baor2,rewh,ros}.
In this situation Remmel and Whitehead noticed
(Theorems~3.1 and~3.2 in \cite{rewh}) that the coefficient
$\coefi$ has a much simpler formula if $\la_3=\la_4$.
The main theorem provides and explanation for that.
We also obtain a new formula for $\coefi$ in this case.

This note is organized as follows.
Section~\ref{sec:par-tab} contains the definitions and notation about
partitions needed in this paper.
In Section~\ref{sec:teor-principal} we give the proof of the main
theorem.
Section~\ref{sec:multita} deals with the Kronecker coefficient $\coefi$
when $\longi\la=\longi\mu\longi\nu$.
In particular, we give, in this case, a new vanishing condition.
Finally, in Section~\ref{sec:aplica} we give an application
of the main theorem.

\section{Partitions} \label{sec:par-tab}

In this section we recall the notation about
partitions needed in this paper.
See for example~\cite{jake, mac, sag, stan}.

For any nonnegative integer $n$ let $\conjunto n:=\{1, \dots, n\}$.
A {\em partition} is a vector $\la = \vector{\la}p$
of nonnegative integers arranged in decreasing order
$\la_1 \ge \cdots \ge \la_p$.
We consider two partitions equal if they differ by a string
of zeros at the end.
For example $(3,2,1)$ and $(3,2,1,0,0)$ represent the same
partition.
The {\em length} of $\la$, denoted by $\longi{\la}$, is the number
of positive parts of $\la$.
The {\em size} of $\la$, denoted by $|\la |$ is the sum of its parts;
if $|\la|=m$, we say that $\la$ is a partition of $m$ and denote
it by $\la\vdash m$.
The partition conjugate to $\la$ is denoted by $\la^\prime$.
A {\em composition} of $m$ is a vector $\pi= \vector \pi r$ of
positive integers such that $\sum_{i=1}^r \pi_i = m$.

The {\em diagram} of $\la$, also denoted by $\la$,
is the set of pairs of positive integers
\[
\la = \{\, ( i, j) \mid  i \in \conjunto p,\  j \in \conjunto{\la_i} \,\}.
\]
The identification of $\la$ with its diagram permits us to use
set theoretic notation for partitions.
If $\delta$ is another partition and $\delta \subseteq \la$,
we denote by $\lambda/ \delta$ the {\em skew diagram} consisting of the
pairs in $\la$ that are not in $\delta$, and by
$\vert\lambda / \delta\vert$ its cardinality.
If $\mu$ is another partition, then $\la\cap\mu$
denotes the set theoretic intersection of $\la$ and $\mu$.

\section{Main theorem} \label{sec:teor-principal}

\begin{teor} \label{teor:principal}
Let $\la$, $\mu$, $\nu$ be partitions of some integer $m$.
Let $p$, $q$, $r$ be integers such that $p\ge \longi\la$,
$q\ge \longi\mu$, $r\ge\longi\nu$ and $p=qr$.
Then for any positive integer $t$ we have
\[
\coefi = {\sf k}(\la + \rectan tp, \mu + \rectan{rt}q, \nu + \rectan{qt}r)\, .
\]
\end{teor}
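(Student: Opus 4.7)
The plan is to derive the identity from Dvir's reduction formula (Theorem~$2.4'$ of~\cite{dvir}), which the abstract identifies as the underlying source. Set
\[
\Lambda := \la + \rectan tp,\quad M := \mu + \rectan{rt}q,\quad N := \nu + \rectan{qt}r,
\]
all of size $m+qrt$. Since $\longi{\Lambda}\le p = qr$, $\longi{M}\le q$ and $\longi{N}\le r$, we have $\longi{\Lambda}\le \longi{M}\cdot\longi{N}$; this is the extremal situation in which Dvir's formula expresses the Kronecker coefficient as a sum of products of \liri coefficients indexed by decompositions attached to the $r$ rows of $N$.

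I would first apply Dvir's recursion to write, schematically,
\[
{\sf k}(\Lambda,M,N) \;=\; \sum_{P,\,Q}\; c^{\,\Lambda}_{\,P}\; c^{\,M}_{\,Q},
\]
where $P=\ro1r$ and $Q=(\sigma(1),\dots,\sigma(r))$ range over sequences of partitions with $|\rho(i)|=|\sigma(i)|=N_i$ and $\longi{\rho(i)},\longi{\sigma(i)}\le q$, and the coefficients are iterated \liri coefficients. The same recursion applied to $\coefi$ yields an analogous sum indexed by sequences $(\rho'(i))$, $(\sigma'(i))$ with $|\rho'(i)|=|\sigma'(i)|=\nu_i$. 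I would then exhibit a bijection between these two index sets by sending $(\rho'(i))_{i=1}^r$ to $(\rho'(i)+\rectan tq)_{i=1}^r$, and similarly for $(\sigma'(i))$. Because $p=qr$, stacking $r$ copies of $\rectan tq$ vertically reconstitutes $\rectan tp$, and this is the unique LR-decomposition of the rectangle into prescribed pieces of height at most~$q$; an analogous statement holds for $\rectan{rt}q$ and its decomposition into $r$ copies of $\rectan tq$. Hence $c^{\,\Lambda}_{\,P}=c^{\,\la}_{\,P'}$ and $c^{\,M}_{\,Q}=c^{\,\mu}_{\,Q'}$ term by term, so the two sums coincide.

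The main obstacle will be verifying the rectangular-uniqueness claim: that the \liri coefficient equating a rectangle with a sequence of $r$ equal rectangles equals one (via the Yamanouchi-word criterion) and that no indexing sequences outside the image of the bijection can contribute, since any such sequence would force a partition exceeding the length bound $q$ or $r$ and fall outside the Dvir regime. Once these checks are in place, the theorem follows.
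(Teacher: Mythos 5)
There is a genuine gap, and it lies in the very first step. The expansion you take as your starting point is not Dvir's Theorem~$2.4'$, and it is not a valid formula for the Kronecker coefficient. A sum of products of iterated Littlewood--Richardson coefficients indexed by sequences $(\rho(1),\dots,\rho(r))$ with $|\rho(i)|=N_i$ computes $\langle \cara{\Lambda}\otimes\cara{M},\phi^{N}\rangle$, where $\phi^{N}$ is the \emph{permutation} character induced from the trivial character of the Young subgroup $\gruposim{N_1}\times\cdots\times\gruposim{N_r}$ (this is precisely Lemma~\ref{lema:lr}); moreover, even for that character the two sequences must carry the same contents, so the count is $\sum_{P}c^{\Lambda}_{P}c^{M}_{P}$ rather than an unrestricted double sum over $P$ and $Q$. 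By Young's rule $\phi^{N}=\cara{N}+(\text{terms }\cara{\eta}\text{ with }\eta\text{ strictly dominating }N)$, so this quantity only bounds ${\sf k}(\Lambda,M,N)$ from above (that is exactly how the paper uses it, in Corollary~\ref{coro:facil}, to get a \emph{vanishing} criterion). Recovering $\cara{N}$ would force an inclusion--exclusion over all compositions occurring in a Jacobi--Trudi expansion, and your bijection and ``rectangular uniqueness'' claims would then have to be checked for every one of those index sets, where the row sums are no longer $N_1,\dots,N_r$ and the length constraints you rely on break down. Finally, the hypothesis of Dvir's theorem is the exact equality $\longi{\nu}=|\la\cap\mu^\prime|$, a cell count of the intersection of a diagram with a conjugate diagram, not the inequality $\longi{\Lambda}\le\longi{M}\,\longi{N}$ that you invoke.

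The paper's proof is much shorter and applies Dvir's theorem exactly once, with the three partitions permuted so that the \emph{long} one sits in the third slot. After reducing to $t=1$ (the general case follows by iteration), put $\alpha=\la+\rectan 1p$, $\beta=\mu+\rectan rq$, $\gamma=\nu+\rectan qr$. Then $\beta\cap\gamma^\prime$ is the full rectangle $(r^q)$, which has exactly $qr=p=\longi{\alpha}$ cells, so Dvir's hypothesis holds for ${\sf k}(\beta,\gamma,\alpha)$; the skew shapes $\beta/\beta\cap\gamma^\prime$ and $\gamma/\beta^\prime\cap\gamma$ are translates of $\mu$ and $\nu$, and $\alpha-(1^p)=\la$, so Dvir's conclusion reads ${\sf k}(\beta,\gamma,\alpha)=\langle\cara{\mu}\otimes\cara{\nu},\cara{\la}\rangle={\sf k}(\mu,\nu,\la)$. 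The $S_3$-symmetry of Kronecker coefficients then gives ${\sf k}(\beta,\gamma,\alpha)={\sf k}(\alpha,\beta,\gamma)$ and $\coefi={\sf k}(\mu,\nu,\la)$, finishing the proof with no bijection of multitableaux at all. If you want to pursue the combinatorics you sketch, note that it is essentially the content of Section~\ref{sec:multita} of the paper (Lemmas~\ref{lema:lr} and~\ref{lema:lrcero}), where it is deliberately used only to prove that $\multiliri{\nu}=0$ forces $\coefi=0$ --- precisely because the permutation-character count cannot, by itself, give the equality of Kronecker coefficients you need.
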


The proof of the main theorem will follow from Dvir's theorem

\begin{teor} \cite[Theorem 2.4$^\prime$]{dvir}
Let $\la$, $\mu$, $\nu$ be partitions of $n$ such that
$\longi\nu = |\la\cap\mu^\prime |$.
Let $l=\longi\nu$ and $\rho=\nu - (1^l)$.
Then
\[
\coefi = \langle \cara{\la/\la\cap\mu^\prime}\otimes
\cara{\mu/\la^\prime\cap\mu}, \cara\rho \rangle\, .
\]
\end{teor}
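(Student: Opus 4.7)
Plan.  I would exploit the total $S_3$-symmetry of $\coefi$ by a cyclic rotation of the padded arguments, so that Dvir's theorem applies in a single stroke, and then induct on $t$.

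Set $\wt\la = \la + \rectan tp$, $\wt\mu = \mu + \rectan{rt}q$, $\wt\nu = \nu + \rectan{qt}r$.  By the symmetry ${\sf k}(A,B,C) = {\sf k}(B,C,A)$, one has ${\sf k}(\wt\la,\wt\mu,\wt\nu) = {\sf k}(\wt\mu,\wt\nu,\wt\la)$.  The first task is to verify Dvir's hypothesis $\longi{\wt\la} = |\wt\mu \cap \wt\nu^\prime|$ for this rotated triple.  Every part of $\wt\la$ is at least $t \ge 1$, so $\longi{\wt\la} = p$.  The partition $\wt\nu$ has exactly $r$ parts, each at least $qt$, so its conjugate $\wt\nu^\prime$ begins with $qt$ rows of length $r$; in particular, since $q \le qt$, the first $q$ rows of $\wt\nu^\prime$ all have length $r$.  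Intersecting with $\wt\mu$, whose $q$ parts are each at least $rt \ge r$, contributes exactly $r$ cells per row, for a total of $qr = p$.  The relation $p = qr$ is used critically here.

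Having checked the hypothesis, Dvir's theorem yields
\[
{\sf k}(\wt\mu,\wt\nu,\wt\la) = \langle \cara{\wt\mu/\rectan rq} \otimes \cara{\wt\nu/\rectan qr},\, \cara{\wt\la - \rectan 1p} \rangle,
\]
with $\wt\mu^\prime \cap \wt\nu = \rectan qr$ by the same reasoning.  The two skew characters then identify with straight ones: the skew shape $\wt\mu/\rectan rq$ has all of its $q$ nonempty rows starting at column $r+1$, with row $i$ carrying $\mu_i + r(t-1)$ cells, so it is a horizontal translate of the Young diagram of $\mu + \rectan{r(t-1)}q$, and its skew character equals that straight character.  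Analogously, $\cara{\wt\nu/\rectan qr} = \cara{\nu + \rectan{q(t-1)}r}$.  Finally, $\wt\la - \rectan 1p = \la + \rectan{t-1}p$.

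Substituting, the scalar product is by definition a Kronecker coefficient, so
\[
{\sf k}(\wt\la,\wt\mu,\wt\nu) = {\sf k}(\la + \rectan{t-1}p,\, \mu + \rectan{r(t-1)}q,\, \nu + \rectan{q(t-1)}r).
\]
Writing $K(t)$ for the right-hand side of the theorem, this is the one-step recursion $K(t) = K(t-1)$ valid for every $t \ge 1$; combined with $K(0) = \coefi$, induction on $t$ finishes the argument.  The main obstacle I expect is the verification of Dvir's hypothesis for the rotated triple: the two flanking intersections must come out to be exactly the rectangles $\rectan rq$ and $\rectan qr$, which is precisely where $p = qr$ enters.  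Once this is in place, the skew-to-straight identification is a routine horizontal-translation observation and the rest is essentially formal.
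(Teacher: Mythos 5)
Your proposal does not prove the statement in question. The statement here is Dvir's Theorem~$2.4'$ itself --- the identity $\coefi = \langle \cara{\la/\la\cap\mu^\prime}\otimes \cara{\mu/\la^\prime\cap\mu}, \cara\rho \rangle$ under the hypothesis $\longi\nu = |\la\cap\mu^\prime|$ --- which the paper does not prove but imports from Dvir's article. Your argument instead \emph{invokes} Dvir's theorem as a black box and uses it to derive the stability identity $\coefi = {\sf k}(\la + \rectan tp, \mu + \rectan{rt}q, \nu + \rectan{qt}r)$, i.e.\ Theorem~\ref{teor:principal} of the paper. As a proof of the stated theorem this is circular (you assume exactly what is to be shown) and, moreover, it terminates in a different conclusion. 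A genuine proof of Dvir's theorem requires machinery absent from your write-up: one must control the components of $\kron$ of maximal length via $|\la\cap\mu^\prime|$ and carry out the row-removal analysis on Kronecker products (Dvir does this with the Littlewood--Richardson rule and an inductive comparison of the two sides after stripping the first column of $\nu$); nothing in your proposal engages with the skew characters $\cara{\la/\la\cap\mu^\prime}$ and $\cara{\mu/\la^\prime\cap\mu}$ except to specialize them in the rectangular situation.

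For what it is worth, as an argument for Theorem~\ref{teor:principal} your computation is correct and is essentially the paper's own: the paper reduces to $t=1$, sets $\beta=\mu+(r)^q$, $\gamma=\nu+(q)^r$, checks $\beta\cap\gamma^\prime=(r^q)$ so that $|\beta\cap\gamma^\prime|=qr=p=\longi{\la+(1)^p}$, applies Dvir once, and concludes by the symmetry of ${\sf k}$; your version does the same verification for general $t$ and phrases the reduction as the recursion $K(t)=K(t-1)$. The intersection computations $\wt\mu\cap\wt\nu^\prime=(r^q)$ and $\wt\mu^\prime\cap\wt\nu=(q^r)$ and the skew-to-straight identifications are all sound. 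But none of this discharges the obligation actually posed, which is to prove Dvir's theorem.
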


\begin{proof}[Proof of theorem~\ref{teor:principal}]
It is enough to prove the theorem for $t=1$.
The general case follows by repeated application of
the particular case.
Let $\alpha= \la + (1)^p$, $\beta = \mu + (r)^q$
and $\gamma = \nu + (q)^r$.
Then $\beta\cap\gamma^\prime =(r^q)$.
In particular, $|\beta\cap\gamma^\prime| = p = \longi\alpha$.
So, we have $\beta/\beta\cap\gamma^\prime = \mu$
and $\gamma/\beta^\prime\cap\gamma = \nu$.
Thus, by Dvir's theorem, we have
\[
{\sf k}(\beta,\gamma,\alpha)= {\sf k}(\mu,\nu,\la)\, .
\]
The claim follows from the symmetry $\coefi = {\sf k}(\mu,\nu,\la)$
of Kronecker coefficients.
\end{proof}

\section{The case $\boldsymbol{\longi\la = \longi\mu \longi\nu}$} \label{sec:multita}

In this section we give a general result for the Kronecker coefficient
$\coefi$ when $\longi\la = \longi\mu \longi\nu$.
On the one hand it gives a new vanishing condition.
On the other hand, when this vanishing condition does not hold, it
reduces the computation of $\coefi$ to the computation of a simpler
Kronecker coefficient.

Let $m$ be a positive integer, $\la$, $\mu$ be partitions of $m$
and $\pi = \vector{\pi}r$ be a composition of $m$.
Let $\rho(i)\vdash\pi_i$ for $i\in \conjunto r$.
A sequence $T=(T_1, \dots , T_r)$ of tableaux is called a
{\em \liri multitableau} of {\em shape} $\la$, {\em content}
$\ro1r$ and {\em type} $\pi$ if

(1) there exists a sequence of partitions
\[
\varnothing=\la(0) \subset \la(1) \subset \cdots \subset \la(r) = \la
\]
such that $|\la(i)/\la(i-1)|=\pi_i$ for all $ i \in \conjunto r$, and

(2) $T_i$ is \liri tableau of shape $\la(i)/\la(i-1)$ and content
$\rho(i)$, for all $i\in \conjunto r$.

Let $\mmultiliri \pi$ denote the set of pairs $(S,T)$ of
\liri multitableaux of shape $(\la,\mu)$, same content and type $\pi$.
This means that $S=\vector Sr$ is a \liri multitableau of shape
$\la$, $T=\vector Tr$ is a \liri multitableau of shape $\mu$
and both $S_i$ and $T_i$ have the same content $\rho(i)$ for
some partition $\rho(i)$ of $\pi_i$, for all $ i\in \conjunto r$.
Let $\clirir \la$ denote the number of \liri multitableaux of shape $\la$
and content $\ro1r$ and let $\multiliri \pi$ denote the cardinality of
$\mmultiliri \pi$.
Then
\begin{equation*}
\multiliri \pi = \sum_{\rosec} \clirir \la \clirir \mu\, .
\end{equation*}

Similar numbers have already proved to be useful in the study of
minimal components, in the dominance order of partitions, of Kronecker
products~\cite{vjac}.

The number $\multiliri \pi$ can be described as an inner product of characters.
For this description we need the permutation character
$\permu\pi := {\rm Ind}_{\gruposim \pi}^{\gruposim m}( 1_{\pi})$,
namely, the induced character from
the trivial character of $\gruposim \pi =
\gruposim{\pi_1} \times \cdots \times \gruposim{\pi_r}$.
It follows from Frobenius reciprocity and the Littlewood-Richardson
rule that (see also~\cite[2.9.17]{jake})

\begin{lema} \label{lema:lr}
Let $\la$, $\mu$, $\pi$ be as above.
Then
\[
\multiliri \pi = \langle \kron, \permu\pi \rangle.
\]
\end{lema}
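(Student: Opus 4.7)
The plan is to combine Frobenius reciprocity, the branching rule from $\gruposim m$ to the Young subgroup $\gruposim\pi = \gruposim{\pi_1}\times\cdots\times\gruposim{\pi_r}$ (a consequence of the Littlewood-Richardson rule), and orthogonality of the irreducible characters of each factor $\gruposim{\pi_i}$.

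First, since $\permu\pi$ is induced from the trivial character of $\gruposim\pi$, Frobenius reciprocity gives
\[
\langle \kron, \permu\pi\rangle = \langle (\kron)\big|_{\gruposim\pi},\, 1_{\pi}\rangle.
\]
Next, iterating the Littlewood-Richardson rule yields the branching formula
\[
\cara\la\big|_{\gruposim\pi} = \sum_{\rosec} \clirir\la \; \cara{\rho(1)}\boxtimes\cdots\boxtimes\cara{\rho(r)},
\]
where $\boxtimes$ denotes the external tensor product of characters; an analogous formula holds for $\cara\mu\big|_{\gruposim\pi}$. The equality $\clirir\la = \#\{\text{LR multitableaux of shape }\la\text{ and content }\ro1r\}$ is exactly the LR rule applied along the chain $\sec\la r$.

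Now I would restrict the internal tensor product: restriction commutes with tensor product, and external tensor products satisfy
\[
(A_1\boxtimes\cdots\boxtimes A_r)\otimes(B_1\boxtimes\cdots\boxtimes B_r) = (A_1\otimes B_1)\boxtimes\cdots\boxtimes(A_r\otimes B_r).
\]
Pairing with $1_\pi = 1_{\pi_1}\boxtimes\cdots\boxtimes 1_{\pi_r}$ therefore factors as a product of inner products over $\gruposim{\pi_1},\dots,\gruposim{\pi_r}$, and on each factor orthogonality of the irreducible characters of $\gruposim{\pi_i}$ gives
\[
\langle \cara{\rho(i)}\otimes\cara{\rho'(i)},\, 1_{\pi_i}\rangle = \langle \cara{\rho(i)}, \cara{\rho'(i)}\rangle = \delta_{\rho(i),\rho'(i)}.
\]
Hence only diagonal terms survive, and summing over all admissible tuples yields
\[
\langle \kron, \permu\pi\rangle = \sum_{\rosec} \clirir\la\,\clirir\mu = \multiliri\pi,
\]
which is the claim.

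The only delicate point is keeping the two notions of tensor product straight: the internal (pointwise) tensor product $\kron$ of $\gruposim m$-characters restricts to the internal tensor product of $\gruposim\pi$-characters, which in turn factors across the direct-product decomposition of $\gruposim\pi$ as an external tensor product of internal tensor products on each $\gruposim{\pi_i}$. Once this bookkeeping is done, the computation reduces to orthogonality on each factor and the statement follows immediately.
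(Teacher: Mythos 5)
Your proof is correct and follows exactly the route the paper indicates: the paper gives no detailed argument, merely stating that the lemma ``follows from Frobenius reciprocity and the Littlewood-Richardson rule'' (with a pointer to James--Kerber 2.9.17), and your write-up is a careful expansion of precisely that sketch. The one implicit step worth noting is that $\langle \cara{\rho(i)}\otimes\cara{\rho'(i)},1_{\pi_i}\rangle=\langle\cara{\rho(i)},\cara{\rho'(i)}\rangle$ uses the fact that symmetric group characters are real-valued, which of course holds.
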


Since Young's rule and Lemma~\ref{lema:lr} imply that
$\multiliri \nu \ge \coefi$, then we have

\begin{coro} \label{coro:facil}
Let $\la$, $\mu$, $\nu$ be partitions of $m$.
If $\multiliri \nu =0$, then $\coefi =0$.
\end{coro}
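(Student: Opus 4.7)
The plan is to deduce the corollary directly from Lemma~\ref{lema:lr} combined with Young's rule, exactly as the sentence preceding the statement hints. No new combinatorial construction is required; the argument is a one-line inner-product comparison together with the non-negativity of Kronecker coefficients.

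First, I would invoke Young's rule to expand the permutation character $\permu\nu$ as a non-negative integer combination of irreducibles:
\[
\permu\nu \;=\; \sum_{\tau\,\domina\,\nu} \kos{\tau}{\nu}\, \cara\tau,
\]
where the Kostka numbers $\kos{\tau}{\nu}$ are non-negative integers and $\kos{\nu}{\nu}=1$. Taking the inner product with $\kron$ and using Lemma~\ref{lema:lr}, I obtain
\[
\multiliri\nu \;=\; \langle \kron,\, \permu\nu \rangle
\;=\; \sum_{\tau\,\domina\,\nu} \kos{\tau}{\nu}\, {\sf k}(\la,\mu,\tau).
\]

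Since every term on the right is a product of non-negative integers, and the term corresponding to $\tau=\nu$ has coefficient $\kos{\nu}{\nu}=1$, the sum dominates the single summand ${\sf k}(\la,\mu,\nu)$; that is, $\multiliri\nu \ge \coefi$. Consequently $\multiliri\nu = 0$ forces $\coefi = 0$, which is the stated vanishing condition.

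There is essentially no obstacle here: the only subtlety is making sure that Lemma~\ref{lema:lr} is interpreted with the same convention for $\permu\nu$ as the one used in Young's rule (both refer to the character of the Young permutation module $M^\nu$), and that the Kronecker coefficients appearing are non-negative integers because they are multiplicities of irreducible characters. Once those conventions are fixed, the argument is a two-line computation.
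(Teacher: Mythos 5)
Your argument is correct and coincides with the paper's own reasoning: the author derives the corollary in the sentence immediately preceding it, observing that Young's rule together with Lemma~\ref{lema:lr} gives ${\sf lr}(\la,\mu;\nu) \ge {\sf k}(\la,\mu,\nu)$, exactly as you do via the Kostka expansion of $\permu\nu$ with $\kos{\nu}{\nu}=1$ and the non-negativity of the Kronecker coefficients. Nothing is missing; your write-up just makes explicit the inner-product computation the paper leaves implicit.
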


\begin{lema} \label{lema:lrcero}
Let $\la$, $\mu$, $\nu$ be partitions of $m$ of lengths $p$, $q$,
$r$, respectively.
If $p=qr$, and $\mu_q < r\la_p$ or $\nu_r <
q\la_p$, then $\multiliri\nu =0$.
\end{lema}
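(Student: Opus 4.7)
My plan is to suppose that $\multiliri\nu > 0$, pick a pair $(S, T) \in \mmultiliri\nu$ with common content sequence $\ro1r$, and derive a contradiction from each hypothesis. Since $\mu_q, \nu_r \ge 0$, both hypotheses force $\la_p \ge 1$, which I assume throughout. The starting point is to exploit $\longi\la = p = qr$: for each $k \in \conjunto{\la_p}$, column $k$ of $\la$ has exactly $p$ cells, while the segment of column $k$ lying in $S_i$ consists of strictly increasing entries from $\{1, \dots, \longi{\rho(i)}\} \subseteq \{1, \dots, q\}$ and therefore has at most $q$ cells. Since $r$ such segments sum to $p = qr$, each must contain exactly $q$ cells filled with $1, 2, \dots, q$. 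Counting these contributions across the $\la_p$ relevant columns yields
\[
\rho(i)_j \ge \la_p \qquad \text{for all } i \in \conjunto r \text{ and } j \in \conjunto q.
\]

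This inequality immediately handles the second hypothesis, since $\nu_r = |\rho(r)| \ge q\la_p$ contradicts $\nu_r < q\la_p$. For the first hypothesis I would prove the key claim that in each $T_i$ every cell with entry $q$ lies in row $q$ of $\mu$. Granting the claim, the $\rho(i)_q$ cells with entry $q$ in $T_i$ all occupy row $q$ of $T_i$, whence $\mu(i)_q - \mu(i-1)_q \ge \rho(i)_q \ge \la_p$, and telescoping yields $\mu_q \ge r\la_p$, contradicting $\mu_q < r\la_p$.

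The main obstacle is the placement claim, which I would establish by a reverse-reading-word argument inside $T_i$. For $t = 0, 1, \dots, q - 1$, let $(j_t, k_t)$ denote the cell of the first occurrence of the value $q - t$ in the reverse reading word of $T_i$; these cells exist because each $\rho(i)_{q-t} \ge \la_p \ge 1$. The ballot condition shows that no value exceeding $q - t$ can appear before $(j_t, k_t)$, and weak increase along rows shows that any cell of $T_i$ strictly to the right of $(j_t, k_t)$ in row $j_t$ must have entry $\ge q - t$; together these observations force $(j_t, k_t)$ to be the rightmost cell of $T_i$ in its row. A further application of the ballot condition places the first $q - t$ before the first $q - t + 1$ in the reading order, so the rows $j_t$ are strictly decreasing and $j_0 \ge q$. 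Combined with $j_0 \le \longi\mu = q$ this forces $j_0 = q$, and any other $q$ in $T_i$ is read later than $(j_0, k_0)$ yet cannot lie below row $q$, so it too sits in row $q$. This completes the claim and hence the proof.
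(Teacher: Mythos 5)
Your proof is correct and follows essentially the same route as the paper: a cell count in the first $\la_p$ columns of $\la$ (which must each be partitioned into $r$ segments of at most $q$ strictly increasing entries, hence exactly $q$) forces $\rho(i)_j \ge \la_p$ for all $i,j$, giving $\nu_r \ge q\la_p$ at once, and the fact that every entry $q$ of a Littlewood--Richardson tableau contained in $\mu$ must sit in row $q$ gives $\mu_q \ge r\la_p$ by telescoping. The only differences are cosmetic: you prove the row-placement fact in detail via the reverse reading word (the paper cites it as an elementary property of LR tableaux), while you leave the companion fact $\longi{\rho(i)} \le \longi\mu = q$ --- which your column-segment bound silently relies on --- unjustified, whereas the paper states it explicitly.
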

\begin{proof}
We assume that $\multiliri\nu >0$ and show that
$\mu_q \ge r\la_p$ and $\nu_r \ge q\la_p$.
Let $(S,T)$ be an element in $ \mmultiliri \nu$ having
content $\ro1r$.
Since $T_i$ is contained in $\mu$, one has, by elementary properties of
Littlewood-Richardson tableaux,
that $\longi{\rho(i)} \le \longi\mu \le q$.
For any $i$, let $n_i$ be the number of squares of $S_i$ that are in
column $\la_p$ of $\la$, then $n_i \le q$.
We conclude that $p = n_1 + \cdots + n_r \le rq =p$.
Therefore $n_i =q = \longi{\rho(i)}$ for all $i$.
This forces that each $S_i$ contains a $j$ in the squares
$(j+(i-1)q, 1), \dots, (j+(i-1)q, \la_p)$ of $\la$,
for all $ j \in \conjunto q$.
So, $\rho(i)_j \ge \la_p$ for all $j$.
In particular, for $i=r$, since $S_r$ has $\nu_r$ squares,
one has $\nu_r  \ge q\la_p$.
Now, since $\longi\mu = q$, all entries of $T_i$ equal to $q$
must be in row $q$ of $\mu$.
Then $\mu_q \ge \rho(1)_q + \cdots + \rho(r)_q \ge r\la_p$.
The claim follows.
\end{proof}

\begin{coro} \label{coro:apliuno}
Let $\la$, $\mu$, $\nu$ be partitions of $m$ of length $p$, $q$, $r$,
respectively.
If $p=qr$, and $\mu_q < r\la_p$ or $\nu_r < q\la_p$, then
$ \coefi = 0$.
\end{coro}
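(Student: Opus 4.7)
The plan is to deduce this corollary directly from the two preceding results in this section, namely Lemma~\ref{lema:lrcero} and Corollary~\ref{coro:facil}. The hypotheses on $p,q,r$ and on the parts $\mu_q$, $\nu_r$, $\la_p$ match exactly those of Lemma~\ref{lema:lrcero}, so that lemma forces $\multiliri\nu = 0$. Corollary~\ref{coro:facil} then immediately yields $\coefi = 0$.

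Spelled out, the argument is a two-step syllogism. First, from the combinatorial description of $\multiliri\nu$ as counting pairs of \liri multitableaux of shape $(\la,\mu)$ and type $\nu$ with matching contents, one sees that the size constraints $\mu_q < r\la_p$ or $\nu_r < q\la_p$ leave no room for such a pair; this is precisely what Lemma~\ref{lema:lrcero} establishes. Second, since $\multiliri\nu$ bounds $\coefi$ from above via Lemma~\ref{lema:lr} together with Young's rule (a permutation character contains every irreducible indexed by a partition that dominates the type), vanishing of $\multiliri\nu$ forces vanishing of $\coefi$; this is the content of Corollary~\ref{coro:facil}.

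Since both ingredients have already been established in the preceding paragraphs, there is no real obstacle in proving the corollary itself: the combinatorial work is hidden in Lemma~\ref{lema:lrcero}, where one had to analyze, under the length constraints $\longi\la = p = qr$ and $\longi\mu = q$, which cells of $\la$ and $\mu$ can be occupied by the entries of the matching content partitions $\rho(1),\dots,\rho(r)$. The corollary itself is then a one-line deduction, and so I would simply record the two implications in order and invoke the stated results.
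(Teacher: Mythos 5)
Your proof is correct and is exactly the paper's argument: the paper's proof of Corollary~\ref{coro:apliuno} is the one-line deduction ``This follows from Lemma~\ref{lema:lrcero} and Corollary~\ref{coro:facil}.'' Your additional commentary accurately summarizes what those two results contribute, so there is nothing to add or correct.
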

\begin{proof}
This follows from Lemma~\ref{lema:lrcero} and Corollary~\ref{coro:facil}.
\end{proof}

Corollary~\ref{coro:apliuno} and Theorem~\ref{teor:principal} imply the
following

\begin{teor} \label{teor:peporqu}
Let $\la$, $\mu$, $\nu$ be  partitions of m of length $p$, $q$, $r$,
respectively.
Let $t=\la_p$ and assume $p=qr$, then we have

(1) If $\mu_q < rt$ or $\nu_r < qt$, then $\coefi = 0$.

(2) If $\mu_q \ge rt$ and $\nu_r \ge qt$, let
$\wt\la = \la - \rectan tp$, $\wt\mu = \mu - \rectan{rt}q$
and $\wt\nu = \nu -\rectan{qt}r$.
Then, $\coefi = {\sf k}(\wt\la,\wt\mu,\wt\nu)$.
\end{teor}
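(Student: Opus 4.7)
The plan is to deduce both parts of Theorem~\ref{teor:peporqu} as essentially immediate corollaries of the two results cited before its statement, with no genuine new work required.

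For part~(1), I would simply observe that the hypotheses are precisely those of Corollary~\ref{coro:apliuno} with the specific choice $t=\la_p$. Since that corollary asserts that $\mu_q<r\la_p$ or $\nu_r<q\la_p$ already forces $\coefi=0$, and $t=\la_p$ makes these inequalities read $\mu_q<rt$ and $\nu_r<qt$, part~(1) follows at once. (No appeal to Theorem~\ref{teor:principal} is needed here.)

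For part~(2), the idea is to reverse the stabilization in Theorem~\ref{teor:principal}. First I would check that $\wt\la$, $\wt\mu$, $\wt\nu$ are honest partitions: the nonnegativity of their smallest parts follows from $\la_p\ge t$ (by definition of $t$) and from the hypotheses $\mu_q\ge rt$, $\nu_r\ge qt$; the monotonicity of their parts is inherited from that of $\la,\mu,\nu$. A quick size count gives $|\wt\la|=|\wt\mu|=|\wt\nu|=m-pt$ (using $p=qr$), so all three are partitions of the same integer, with lengths at most $p$, $q$, $r$ respectively.

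Next I would apply Theorem~\ref{teor:principal} to the triple $(\wt\la,\wt\mu,\wt\nu)$ with the same $p,q,r$ (note $p=qr$ is already assumed) and the given positive integer $t$. That theorem yields
\[
{\sf k}(\wt\la,\wt\mu,\wt\nu)
={\sf k}\bigl(\wt\la+\rectan tp,\;\wt\mu+\rectan{rt}q,\;\wt\nu+\rectan{qt}r\bigr)
={\sf k}(\la,\mu,\nu)=\coefi,
\]
which is exactly the claim. I do not anticipate any real obstacle; the only mild care needed is in verifying that the subtracted rectangles leave honest partitions and that the length/size hypotheses of Theorem~\ref{teor:principal} hold for the tilded triple, both of which are routine bookkeeping using the hypotheses $\mu_q\ge rt$ and $\nu_r\ge qt$ together with $p=qr$.
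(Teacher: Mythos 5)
Your proposal is correct and follows exactly the route the paper intends: the paper gives no separate proof of Theorem~\ref{teor:peporqu}, stating only that it is implied by Corollary~\ref{coro:apliuno} (which is your part~(1) with $t=\la_p$) and Theorem~\ref{teor:principal} (applied, as you do, to the tilded triple and then ``reversed''). Your added bookkeeping --- checking that $\wt\la,\wt\mu,\wt\nu$ are partitions of the common size $m-pt$ with the required length bounds, and noting $t=\la_p\ge 1$ since $\longi\la=p$ --- is exactly what is needed to make the paper's one-line deduction rigorous.
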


\section{Applications} \label{sec:aplica}

We conclude this paper with an application to the expansion of
$\cara\mu\otimes\cara\nu$ when $\longi\mu =2 = \longi\nu $.
It is well known that any component of
$\cara\mu\otimes\cara\nu$ corresponds to a partition of length at
most $|\mu\cap\nu^{\,\prime}|\le 4$, see
Satz~1 in~\cite{clme}, Theorem~1.6 in~\cite{dvir} or
Theorem~2.1 in~\cite{rewh}.
Even in this simple case a {\em nice} closed formula seems
unlikely to exist.
J.~Remmel and T.~Whitehead (Theorem~2.1 in~\cite{rewh}) gave a close,
though intricate, formula for $\coefi$ valid for any $\la$ of length at
most 4; M. Rosas (Theorem~1 in~\cite{ros}) gave a formula of
combinatorial nature for $\coefi$, which requires taking subtractions,
also valid for any $\la$ of length at most 4; C.~Ballantine and
R.~Orellana (Proposition~4.12 in~\cite{baor2}) gave a simpler
formula for $\coefi$, at the cost of assuming an extra condition on
$\la$.

Note that when $\longi\la =1$ the coefficient $\coefi$ is trivial to
compute.
For $\longi\la=2$  Remmel-Whitehead formula for $\coefi$ reduces
to a simpler one (Theorem~3.3 in \cite{rewh}).
This formula was recovered by Rosas in a different way
(Corollary~1 in~\cite{ros}).
So, the nontrivial cases are those for which $\longi\la = 3,4$.
Corollary~\ref{coro:dospordos} deals with the case of length 4.
On the one hand it gives a new vanishing condition.
On the other hand, when this vanishing condition does not hold, it
reduces the case of length 4 to the case of length 3.
Thus, this reduction would help to simplify the proofs of the formulas
given by Remmel-Whitehead and Rosas.

The following corollary is a particular case of Theorem~\ref{teor:peporqu}.

\begin{coro} \label{coro:dospordos}
Let $\la$, $\mu$, $\nu$ be a partitions of $m$ of length 4, 2, 2, respectively.
Let $t=\la_4$, then we have

(1) If $\mu_2 < 2t$ or $\nu_2 < 2t$, then $\coefi = 0$.

(2) If $\mu_2 \ge 2t$ and $\nu_2 \ge 2t$, let
$\wt\la = (\la_1 -t, \la_2-t, \la_3-t)$,
$\wt\mu = (\mu_1- 2t, \mu_2 -2t)$ and $\wt\nu = (\nu_1-2t, \nu_2 -2t)$.
Then, $\coefi = {\sf k}(\wt\la,\wt\mu,\wt\nu)$.
\end{coro}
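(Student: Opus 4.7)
The proof is a direct specialization of Theorem~\ref{teor:peporqu} to the parameters $p = 4$, $q = 2$, $r = 2$, which satisfy $p = qr$. These values match the lengths $\longi\la = 4$, $\longi\mu = 2$, $\longi\nu = 2$, and the choice $t = \la_4$ in the corollary coincides with the choice $t = \la_p$ made in the theorem.

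Part~(1) then falls out immediately, since the hypothesis $\mu_2 < 2t$ or $\nu_2 < 2t$ reads verbatim as $\mu_q < rt$ or $\nu_r < qt$, and Theorem~\ref{teor:peporqu}(1) gives $\coefi = 0$.

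Part~(2) is equally direct. The inequalities $\mu_2 \ge 2t$ and $\nu_2 \ge 2t$ are the hypothesis of Theorem~\ref{teor:peporqu}(2), which yields
\[
\coefi = {\sf k}(\la - \rectan t4,\, \mu - \rectan{2t}2,\, \nu - \rectan{2t}2).
\]
The only small point to note is that the last part of $\la - \rectan t4$ equals $\la_4 - t = 0$, so under the convention of ignoring trailing zeros this partition coincides with $\wt\la = (\la_1 - t, \la_2 - t, \la_3 - t)$; for $\wt\mu$ and $\wt\nu$ no such truncation is needed. Since the entire content of the corollary is carried by Theorem~\ref{teor:peporqu}, there is no substantive obstacle in the argument; the corollary merely isolates the simplest nontrivial case ($q = r = 2$, $p = 4$) highlighted in the introduction.
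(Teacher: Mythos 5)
Your proposal is correct and matches the paper exactly: the paper states that Corollary~\ref{coro:dospordos} is simply the particular case $p=4$, $q=r=2$, $t=\la_4$ of Theorem~\ref{teor:peporqu}, which is precisely your argument. Your remark about dropping the trailing zero in $\la - \rectan t4$ is a sensible clarification consistent with the paper's convention that partitions differing by trailing zeros are equal.
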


Another observation of Remmel and Whitehead (Theorems~3.1 and~3.2
in~\cite{rewh}) is that their formula simplifies considerably in the
case $\la_3=\la_4$.
Corollary~\ref{coro:dospordos} explains this phenomenon since,
in this case, the computation of $\coefi$ reduces to the computation of a
Kronecker coefficient involving only three partitions of length
at most 2, which have a simple nice formula (Theorem~3.3 in~\cite{rewh}).
In fact, combining our result with this simple formula
we obtain a new one.
For completeness we record here Remmel-Whitehead formula in the equivalent
version of Rosas.

In the next theorems the notation $(y\ge x)$ means 1 if $y\ge x$
and 0 if $y\ngeq x$.

\begin{teor} \cite[Theorem~3.3]{rewh} \label{teor:rewh}
Let $\la$, $\mu$, $\nu$ be partitions of $m$ of length 2.
Let $x=\max\left(0, \left\lceil \tfrac{\nu_2+\mu_2+\la_2 -m}{2}\right\rceil \right)$ and
$y= \left\lceil \tfrac{\nu_2+\mu_2 -\la_2 +1}{2}\right\rceil$.
Assume $\nu_2 \le \mu_2 \le \la_2$.
Then
\[
\coefi = (y-x)(y\ge x)\, .
\]
\end{teor}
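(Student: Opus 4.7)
The plan is to compute $\coefi$ by combining Lemma~\ref{lema:lr} with the Pieri-type product for two-row Schur functions, and then telescoping an alternating sum. First, I would invert Young's rule: since $\permu{(\pi_1,\pi_2)} = \cara{(\pi_1,\pi_2)} + \cara{(\pi_1+1,\pi_2-1)} + \cdots + \cara{(m)}$, one gets $\cara\nu = \permu\nu - \permu{(\nu_1+1,\nu_2-1)}$ (with the second term understood to vanish when $\nu_2 = 0$). Applying Lemma~\ref{lema:lr} to both sides yields
\[
\coefi = \multiliri{(\nu_1,\nu_2)} - \multiliri{(\nu_1+1,\nu_2-1)},
\]
so it suffices to compute $\multiliri\pi$ for length-two $\pi = (\pi_1, \pi_2)$.

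Next I would expand $\multiliri\pi = \sum_{\rho(1)\vdash\pi_1,\,\rho(2)\vdash\pi_2} \coefliri{\lambda}{\rho(1)}{\rho(2)}\, \coefliri{\mu}{\rho(1)}{\rho(2)}$ and parametrize $\rho(t) = (\pi_t - k_t, k_t)$. The two-row Pieri rule (which follows from $s_{(a,b)}(x_1,x_2) = (x_1 x_2)^b h_{a-b}(x_1,x_2)$ and the standard expansion of $h_m h_n$ into two-row Schur functions) shows that each $\coefliri{\lambda}{(a,b)}{(c,d)}$ belongs to $\{0,1\}$, with value $1$ exactly when $b + d \le \lambda_2 \le \min(a+d,\, b+c)$. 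Substituting the parametrization and using the hypothesis $\nu_2 \le \mu_2 \le \lambda_2$, the combined conditions for $\lambda$ and $\mu$ simplify to $k_1 + k_2 \le \mu_2$ together with $\lambda_2 - \pi_2 \le k_1 - k_2 \le \pi_1 - \lambda_2$.

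The change of variables $d = k_1 - k_2$, $s = k_1 + k_2$, together with the parity constraint $s \equiv d \pmod 2$ and $|d| \le s$, turns $\multiliri\pi$ into a lattice-point count in an explicit rectangle. Passing from $\pi = \nu$ to $\pi = (\nu_1+1,\nu_2-1)$ shifts the $d$-range by exactly one, so in the difference the interior of the $d$-interval cancels and only two boundary columns survive: one at $d = \lambda_2 - \nu_2$ and one at $d = \nu_1 - \lambda_2 + 1$. A direct evaluation of these one-dimensional counts, using the identity $\lceil (n+1)/2 \rceil = \lfloor n/2 \rfloor + 1$, identifies them with $y$ and $x$, respectively, giving $\coefi = y - x$ whenever both counts are positive.

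The main technical obstacle is the bookkeeping of the parity constraint $s \equiv d \pmod 2$ and the degenerate boundary cases (for instance $\nu_2 = 0$, $\lambda_2 > \mu_2 + \nu_2$, or $\lambda_2 + \mu_2 + \nu_2 \le m$). These must be treated with care so that the precise ceiling expressions, the $\max(0, \cdot)$ in the definition of $x$, and the indicator $(y \ge x)$—the last forced by the nonnegativity of $\coefi$ when the second boundary count would otherwise dominate—emerge naturally from the cancellation.
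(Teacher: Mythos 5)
The paper does not actually prove this statement: it is quoted, in Rosas's equivalent formulation, from Remmel and Whitehead \cite[Theorem~3.3]{rewh}, so there is no internal proof to compare against. Your blind derivation is, as far as I can check, correct and self-contained, and it takes a genuinely different route from the published ones: Remmel and Whitehead obtain the formula by an algorithmic expansion of products of two-row Schur functions, and Rosas by a separate combinatorial argument, whereas you run everything through the paper's own Lemma~\ref{lema:lr}. The skeleton is sound: $\cara\nu=\permu{\nu}-\permu{(\nu_1+1,\nu_2-1)}$ gives $\coefi=\multiliri{\nu}-\multiliri{(\nu_1+1,\nu_2-1)}$; the two-row Littlewood--Richardson coefficients are $0/1$ with exactly the interval condition $b+d\le\la_2\le\min(a+d,b+c)$ (correct, by specializing to two variables, which is faithful on Schur functions of length at most $2$); the discarded constraints $2k_1\le\pi_1$, $2k_2\le\pi_2$ are indeed implied by the ones you keep; the $s$-conditions do not involve $\pi$, so shifting the $d$-interval by one telescopes as claimed; and the two surviving columns evaluate to $\max(y,0)$ and to $x$, using $\lceil(\mu_2+\la_2-\nu_1)/2\rceil=\lfloor(\mu_2+\la_2-\nu_1-1)/2\rfloor+1$ together with $\nu_2+\mu_2+\la_2-m=\mu_2+\la_2-\nu_1$. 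Two points deserve to be written out rather than waved at: (i) the column at $d=\la_2-\nu_2$ counts $\max(y,0)$, not $y$, so the case $y\le 0$ (where necessarily $x=0$) must be checked separately to land on the stated form $(y-x)(y\ge x)$; and (ii) the telescoping identity for the difference of the two interval counts needs the lower endpoint of the first interval to be at most one more than its upper endpoint, which here reads $2\la_2\le m+1$ and always holds. What your approach buys is a proof of Theorem~\ref{teor:rewh} entirely inside the framework of Section~\ref{sec:multita}, which fits naturally with the paper's program of deriving two-row Kronecker formulas from the numbers $\multiliri{\pi}$.
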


From Corollary~\ref{coro:dospordos} and Theorem~\ref{teor:rewh} we obtain

\begin{teor}
Let $\la$, $\mu$, $\nu$ be partitions of $m$ of length 4, 2, 2, respectively.
Suppose that $\la_3 = \la_4$ and that $2\la_3 \le \nu_2 \le \mu_2$.
Let $x=\max\left( 0, \left\lceil \tfrac{\nu_2+\mu_2+\la_2
-\la_3 -m}{2}\right\rceil\right)$,
$y=\left\lceil\tfrac{\nu_2 + \la_2 - \mu_2 - \la_3 + 1}{2}\right\rceil$
and
$z=\left\lceil\tfrac{\nu_2 + \mu_2 - \la_2 - 3\la_3 + 1}{2}\right\rceil$.
We have

(1) If $\la_2 + \la_3 \le \mu_2$, then $\coefi = (y-x)(y \ge x)$.

(2) If $\la_2 + \la_3 > \mu_2$, then $\coefi = (z-x)(z \ge x)$.
\end{teor}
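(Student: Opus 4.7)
The plan is a two-step reduction. First I apply Corollary~\ref{coro:dospordos} to pass to a Kronecker coefficient involving length-$\le 2$ partitions, and then I apply Theorem~\ref{teor:rewh} to evaluate it. With $t=\la_4=\la_3$, the hypothesis $2\la_3\le\nu_2\le\mu_2$ gives $\mu_2\ge 2t$ and $\nu_2\ge 2t$, so part~(2) of Corollary~\ref{coro:dospordos} applies and yields
\[
\coefi \;=\; {\sf k}(\wt\la,\wt\mu,\wt\nu),
\]
where $\wt\la=(\la_1-\la_3,\la_2-\la_3)$, $\wt\mu=(\mu_1-2\la_3,\mu_2-2\la_3)$, and $\wt\nu=(\nu_1-2\la_3,\nu_2-2\la_3)$ are partitions of $m':=m-4\la_3$ of length at most $2$.

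Theorem~\ref{teor:rewh} requires the partition with the largest second part to play the role of $\la$, so the case split in the statement corresponds precisely to which of $\wt\la_2,\wt\mu_2,\wt\nu_2$ is maximal. From $\wt\mu_2-\wt\nu_2=\mu_2-\nu_2\ge 0$ and $\wt\mu_2-\wt\la_2=\mu_2-\la_2-\la_3$ one reads off: $\wt\mu_2$ is the maximum exactly when $\la_2+\la_3\le\mu_2$ (case~(1)); otherwise $\wt\la_2$ is the maximum, and the inequality $\la_2+\la_3>\mu_2\ge\nu_2$ simultaneously gives $\wt\la_2>\wt\nu_2$ for free (case~(2)). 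Since the $y$-formula of Theorem~\ref{teor:rewh} is symmetric in the two smaller second parts, no further ordering is needed, and Kronecker symmetry allows us to place the maximal-second-part partition in the $\la$-slot.

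It then remains to substitute $m'=m-4\la_3$ together with the values of $\wt\la_2,\wt\mu_2,\wt\nu_2$ into the formulas of Theorem~\ref{teor:rewh}. The $x$-expression $\wt\nu_2+\wt\mu_2+\wt\la_2-m'$ is symmetric in the three partitions and collapses to $\nu_2+\mu_2+\la_2-\la_3-m$ in both cases, recovering the stated~$x$. In case~(1) the $y$-numerator is $\wt\nu_2+\wt\la_2-\wt\mu_2+1=\nu_2+\la_2-\mu_2-\la_3+1$, which is the stated~$y$; in case~(2) it is $\wt\nu_2+\wt\mu_2-\wt\la_2+1=\nu_2+\mu_2-\la_2-3\la_3+1$, which is the stated~$z$. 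There is no genuine obstacle here: the two-step reduction forces the case split, and the rest is routine algebra; the only point requiring some care is verifying, as above, that the correct hypothesis $\wt\nu_2\le\wt\mu_2\le\wt\la_2$ of Theorem~\ref{teor:rewh} (after relabelling via Kronecker symmetry) is implied in each case by the standing assumptions.
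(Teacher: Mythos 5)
Your proposal is correct and follows essentially the same route as the paper: reduce via Corollary~\ref{coro:dospordos} with $t=\la_3=\la_4$ to ${\sf k}(\wt\la,\wt\mu,\wt\nu)$, then evaluate with Theorem~\ref{teor:rewh}, the case split being governed by which of $\wt\la_2,\wt\mu_2,\wt\nu_2$ is largest. Your observation that the formula depends only on the maximal second part (since $x$ is symmetric and $y$ is symmetric in the two smaller ones) is exactly how the paper collapses its cases (a) and (b) into one.
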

\begin{proof}
Let $\wt\la = (\la_1 - \la_3, \la_2 - \la_3)$,
$\wt\mu = (\mu_1- 2\la_3, \mu_2 -2\la_3)$ and
$\wt\nu = (\nu_1-2\la_3, \nu_2 -2\la_3)$.
These are partitions of $m-4\la_3$.
Then, by Corollary~\ref{coro:dospordos},
$\coefi = {\sf k}(\wt\la,\wt\mu,\wt\nu)$.
Since $\longi{\wt\la} = \longi{\wt\mu} = \longi{\wt\nu}=2$,
we can apply Theorem~\ref{teor:rewh}.
Due to the symmetry of the Kronecker coefficients we are assuming
$\nu_2 \le \mu_2$.
We have to consider
three cases: (a) $\la_2 -\la_3 \le \nu_2 -2\la_3$,
(b) $\nu_2 -2\la_3 < \la_2 -\la_3 \le \mu_2 -2\la_3$ and
(c) $\mu_2 - 2\la_3 < \la_2 -\la_3 $.
In the first two cases Remmel-Whitehead formula yields the same
formula for ${\sf k}(\wt\la,\wt\mu,\wt\nu)$.
So, we have only two cases to consider:
(1) $\la_2 +\la_3 \le \mu_2$ and (2) $\mu_2 < \la_2 +\la_3$.
In the first case Theorem~\ref{teor:rewh} yields
\[
{\sf k}(\wt\la,\wt\mu,\wt\nu) = (y^\prime-x^\prime)(y^\prime \ge x^\prime)
\]
where
$x^\prime = \max\left( 0, \left\lceil
\tfrac{\nu_2 - 2\la_3 + \la_2 -\la_3 + \mu_2 -2\la_3 -(m-4\la_3)}{2}\right\rceil
\right)$ and
$y^\prime = \left\lceil \tfrac{\nu_2 - 2\la_3 + \la_2 - \la_3
- (\mu_2 - 2\la_3) +1}{2}\right\rceil $.
It is straightforward to check that $x^\prime =x$ and $y^\prime = y$,
so the first claim follows.

The second case is similar.
\end{proof}

{\small

}


\begin{thebibliography}{99}

\bibitem{baor1} C.M. Ballantine and R.C. Orellana,
On the Kronecker product $s(n-p,p) \ast s_\la$,
{\em Electron. J. Combin.}\ {\bf 12} (2005) Reseach Paper 28, 26 pp.
(electronic).

\bibitem{baor2} C.M. Ballantine and R.C. Orellana,
A combinatorial interpretation for the coefficients in the Kronecker
product $s(n-p,p) \ast s_\la$,
{\em S\'em. Lotar. Combin.}\ {\bf 54A} (2006), Art. B54Af, 29pp.
(electronic).

\bibitem{bri} M. Brion,
Stable properties of plethysm: on two conjectures of Foulkes,
{\em manuscripta math.}\ {\bf 80} (1993), 347--371.

\bibitem{clme} M. Clausen and H. Meier,
Extreme irreduzible Konstituenten in Tensordarstellungen
symmetrischer Gruppen,
{\em Bayreuther Math. Schriften} {\bf 45} (1993), 1--17.

\bibitem{dvir} Y. Dvir,
On the Kronecker product of $S_n$ characters,
{\em J. Algebra}\ {\bf 154} (1993), 125--140.

\bibitem{jake} G.D. James and A. Kerber,
``The representation theory of the symmetric group",
Encyclopedia of mathematics and its applications, Vol. 16,
Addison-Wesley, Reading, Massachusetts, 1981.

\bibitem{mac} I.G. Macdonald,
``Symmetric functions and Hall polynomials," 2nd. edition
Oxford Mathematical Monographs Oxford Univ. Press 1995.

\bibitem{mur} F.D. Murnaghan,
The analysis of the Kronecker product of irreducible representations
of the symmetric group,
{\em Amer. J. Math.}\ {\bf 60} (1938), 761--784.

\bibitem{rewh} J.B. Remmel and T. Whitehead,
On the Kronecker product of Schur functions of two row shapes,
{\em Bull. Belg. Math. Soc.}\ {\bf 1} (1994), 649--683.

\bibitem{ros} M.H. Rosas,
The Kronecker product of Schur functions indexed by two-row shapes
or hook shapes,
{\em J. Algebraic Combin.}\ {\bf 14} (2001), 153--173.

\bibitem{sag} B. Sagan,
``The symmetric group. Representations, combinatorial algorithms
and symmetric functions".
Second ed. Graduate Texts in Mathematics 203.
Springer Verlag, 2001.

\bibitem{stan} R.P. Stanley,
``Enumerative Combinatorics, Vol. 2" ,
Cambridge Studies in Advanced Mathematics 62. Cambridge Univ. Press, 1999.

\bibitem{vejc} E. Vallejo,
Stability of Kronecker product of irreducible characters of the
symmetric group,
{\em Electron. J. Combin}\ {\bf 6} (1999) Reseach Paper 39, 7 pp.
(electronic).

\bibitem{vjac} E. Vallejo,
Plane partitions and characters of the symmetric group,
{\em J. Algebraic Combin.}\ {\bf 11} (2000), 79--88.


\end{thebibliography}
\end{document}